\documentclass[pdflatex,sn-mathphys-num]{sn-jnl}

\usepackage{graphicx}
\usepackage{multirow}
\usepackage{amsmath,amssymb,amsfonts}
\usepackage{amsthm}
\usepackage[title]{appendix}
\usepackage{xcolor}
\usepackage{textcomp}
\usepackage{manyfoot}
\usepackage{booktabs}
\usepackage{algorithm}
\usepackage{algorithmicx}
\usepackage{algpseudocode}
\usepackage{listings}
\usepackage{silence}
\usepackage[all]{xy}

\numberwithin{equation}{section}

\theoremstyle{thmstyleone}
\newtheorem{theorem}{Theorem}
\newtheorem{proposition}[theorem]{Proposition}

\newtheorem{lemma}{Lemma} 

\theoremstyle{thmstyletwo}
\newtheorem{example}{Example}
\newtheorem{remark}{Remark}

\theoremstyle{thmstylethree}
\newtheorem{definition}{Definition}

\vfuzz=\maxdimen
\hfuzz=\maxdimen

\newcommand{\Aut}{\operatorname{Aut}}
\newcommand{\Ker}{\operatorname{Ker}}
\newcommand{\Ext}{\operatorname{Ext}}
\newcommand{\Bim}{\operatorname{Bim}}
\newcommand{\Barc}{\operatorname{Bar}}
\newcommand{\Ob}{\operatorname{Ob}}
\newcommand{\Ima}{\operatorname{Im}}
\newcommand{\Hom}{\operatorname{Hom}}
\newcommand{\Mod}{\operatorname{Mod}}
\newcommand{\Hrm}{\operatorname{H}}
\newcommand{\HH}{\operatorname{HH}}

\newcommand{\Z}{\operatorname{\mathbb{Z}}}
\newcommand{\N}{\operatorname{\mathbb{N}}}

\begin{document}

\title[Equivariant Hochschild cohomology of  group algebras and relative $\Ext$]{Equivariant Hochschild cohomology of  group algebras and relative $\Ext$}

\author[1]{\fnm{Andrada} \sur{Pojar}}\email{Andrada.Pojar@math.utcluj.ro}

\author[2,3]{\fnm{Constantin-Cosmin} \sur{Todea}}\email{Constantin.Todea@math.utcluj.ro}

\affil[1]{\orgdiv{Department of Mathematics}, \orgname{Technical University of Cluj-Napoca}, \orgaddress{\street{25 G. Baritiu St}, \city{Cluj-Napoca}, \postcode{400027}, \country{Romania}}}

\affil[2]{\orgdiv{Department of Mathematics}, \orgname{Technical University of Cluj-Napoca}, \orgaddress{\street{25 G. Baritiu St}, \city{Cluj-Napoca}, \postcode{400027}, \country{Romania}}}

\affil[3]{\orgdiv{Department of Mathematics}, \orgname{Babes-Bolyai University}, \orgaddress{\street{1 Mihail Kogalniceanu St}, \city{Cluj-Napoca}, \postcode{400084}, \country{Romania}}}

\abstract{For a finite group  $\Gamma$, acting on a finite group $G,$ we find necessary conditions for which the first $\Gamma_0$-equivariant Hochschild cohomology of the group algebra $kG$ is non-trivial, where  $k$ is a field of characteristic $p$ dividing the order of $G$ and $\Gamma_0$ is the stabilizer subgroup in $\Gamma$ of some element in $G.$ For any field $k$ we show that the $\Gamma$-equivariant Hochschild cohomology of $\Gamma$-algebras with coefficients in a $\Gamma$-equivariant bimodule \citep{Je} is isomorphic with some $k\Gamma$-relative $\Ext,$ in the context of relative homological algebra.}

\keywords{group, Hochschild, cohomology, equivariant, algebra}

\pacs[MSC Classification]{16E40, 20J06}

\maketitle

\section{Introduction}\label{sec1}

When there is a group action on an  algebra, this algebra is not studied in isolation but together with symmetries and then equivariant Hochschild cohomology (homology) shows up. There is a recent growing interest in studying equivariant Hochschild (co)homology \cite{KoPi}, \cite{In}, \cite{Mu}.

Let $\Gamma$ be a  finite group and $G$ be a  finite $\Gamma$-group, which is a group $(G,\cdot)$ endowed with a $\Gamma$-action by group automorphisms. This means that there is a group homomorphism $\Gamma\rightarrow \Aut(G,\cdot)$ defining  the $\Gamma$-action, that is denoted by $$\Gamma \times G \rightarrow G, (\sigma,g)\mapsto {}^{\sigma}g,$$ satisfying some well-known axioms. Sometimes, like in \citep{Ceg}, these groups are called  "groups with operators". Let $k$ be a field and $n\in \Z, n \geq 0.$ By a $\Gamma$-equivariant $kG$-module $N$ we understand a $kG$-module that is also a $k\Gamma$-module and the action of $G$ and $\Gamma$ are related to each other by the
equality $${}^{\sigma}(g m)={}^{\sigma}g({}^{\sigma} m),$$
for any $\sigma\in\Gamma, g\in G, m\in N.$ A structure of $\Gamma$-equivariant $kG$-module on $N$ is the same as a structure of left $k(G \rtimes \Gamma)$-module on $N,$ where $G \rtimes \Gamma$ is the semi-direct product of $G$ by $\Gamma,$ see \citep{Ceg}.

In Subsection \ref{subsec21} we revisit equivariant group cohomology following \cite[Section 1]{In05}. For $N$  a $\Gamma$-equivariant $kG$-module we denote by $\mathrm{H}_{\Gamma}^n(G,N)$ the $\Gamma$-equivariant $n$-th cohomology of $G$ with coefficients in $N.$ Inspired by \cite[Section 2]{Gou}, we present in Subsection \ref{subsec22}  a definition of $\Gamma$-equivariant Hochschild cohomology. Let $A$ be a finite dimensional $\Gamma$-algebra (see the first paragraphs of Subsection \ref{subsec22}) and let $M$ be a $\Gamma$-equivariant $(A-A)$-bimodule. Equivalently, $M$ is an $(A-A)$-bimodule such that the  linear action of $\Gamma$ on $M$ is compatible with the left, respectively right, $A$-module structure of $M.$  The $\Gamma$-equivariant $n$-th   Hochschild cohomology will be denoted by $\HH_{\Gamma}^n(A,M).$ Historically, it seems that this variant of $\Gamma$-equivariant Hochschild cohomology  of an algebra with coefficients in a bimodule was previously defined by K. K. Jensen in the context of Banach algebras, \cite[Definition I.1.7]{Je}. In a recent article \citep{KoPi}, the authors develop another variant still  called equivariant Hochschild cohomology which, in the above context, is denoted by $\Hrm^*_{\Gamma}(A,M).$ This is the homology of the total complex of a bicomplex, obtained by mixing the Hochschild cochain complex and the cochain complex of the $\Gamma$-group cohomology \cite[Definition 5]{KoPi}. The objective of  Subsection \ref{subsec23} is to recall some basic aspects of the relative homological algebra, which shall be need it for proving the second main result of this paper.

The first Hochschild cohomology of an algebra is the same as the space of outer derivations on this algebra.  Studying the triviality, or non-triviality, of outer derivations on algebraic structures is an important topic in a variety of areas: derivations on Lie algebras \cite{Ja}, derivations on group algebras \cite{Ar}, derivations on block algebras of finite groups \cite[Section 7]{Li22}, \cite{Br}, etc. Let $x\in G$ and denote by $\Gamma_x$ the stabilizer subgroup of $x$ in $\Gamma.$  In the first main result we study the above problem for the space of $\Gamma_x$-equivariant outer derivations of the group algebra $kG$. In Section \ref{sec3}, as a particular case of Proposition \ref{prop33} we obtain a $k$-linear injective map from  $\Hrm_{\Gamma_x}^1(C_G(x),k)$ into the
first $\Gamma_x$-equivariant Hochschild cohomology of $kG.$  Using this, in the first main theorem of this paper we obtain some necessary conditions for the non-triviality of the first $\Gamma_x$-equivariant Hochschild cohomology of $kG.$ In Section \ref{sec4} we introduce these conditions, by  defining  $C_{\Gamma,G}(p)$-good elements (Definition \ref{def41}), which are applied  for a finite $\Gamma$-group $G,$ when $p$ divides $|G|$ and $k$ is a field of characteristic $p.$ We begin Section \ref{sec4} with some examples of $C_{\Gamma,G}(p)$-good elements and finish by giving the proof of the following result.

\begin{theorem}\label{thm11}
Let $k$ be a field of characteristic $p$ and $G$ be a $\Gamma$-group.
 If there is a $C_{\Gamma,G}(p)$-good element $x$ in $G$ then $\HH_{\Gamma_x}^1(kG)\neq 0.$
\end{theorem}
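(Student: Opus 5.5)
The plan is to reduce the statement to a non-vanishing question in equivariant group cohomology, using the injection announced in the introduction. As a particular case of Proposition \ref{prop33}, applied with $\Gamma_x$ in place of $\Gamma$ and with $x$ fixed by $\Gamma_x$, we have a $k$-linear injective map
$$\Hrm_{\Gamma_x}^1(C_G(x),k)\hookrightarrow \HH_{\Gamma_x}^1(kG).$$
Here $C_G(x)$ is a $\Gamma_x$-group, since ${}^{\sigma}x=x$ for all $\sigma\in\Gamma_x$. So it suffices to show $\Hrm_{\Gamma_x}^1(C_G(x),k)\neq 0$ whenever $x$ is $C_{\Gamma,G}(p)$-good.

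Next, identify $\Hrm_{\Gamma_x}^1(C_G(x),k)$ concretely, with $k$ carrying the trivial action. Following the description of equivariant group cohomology in Subsection \ref{subsec21}, degree-one classes should be the $\Gamma_x$-invariant crossed homomorphisms modulo principal ones. With trivial coefficients the principal ones vanish, so we should get
$$\Hrm_{\Gamma_x}^1(C_G(x),k)\cong \Hom_{\Gamma_x}(C_G(x),k^+),$$
the group homomorphisms $f\colon C_G(x)\to (k,+)$ with $f({}^{\sigma}g)=f(g)$ for all $\sigma\in\Gamma_x$. Such an $f$ factors through the largest elementary abelian $p$-quotient of $C_G(x)$ on which $\Gamma_x$ acts trivially. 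That quotient is $C_G(x)/[C_G(x),C_G(x)]\,C_G(x)^p\,[C_G(x),\Gamma_x]$, where $[C_G(x),\Gamma_x]$ is generated by the elements $g^{-1}\,{}^{\sigma}g$. Since $k$ has characteristic $p$, a nonzero such homomorphism exists exactly when this quotient is nontrivial: compose any nonzero linear functional on this $\mathbb{F}_p$-space with $\mathbb{F}_p\subseteq k$.

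Finally, unpack Definition \ref{def41}. I expect a $C_{\Gamma,G}(p)$-good element to be defined so that $C_G(x)$ has a normal subgroup $N$, stable under $\Gamma_x$, with $C_G(x)/N$ a nontrivial (elementary abelian) $p$-group on which $\Gamma_x$ acts trivially, or some equivalent formulation such as $p$ dividing the order of the $\Gamma_x$-coinvariant abelianization. In the first case, take a nonzero homomorphism $C_G(x)/N\to \mathbb{Z}/p\subseteq k$ and compose it with the projection. This gives a nonzero $\Gamma_x$-invariant homomorphism, hence a nonzero class, and its image under the injection is a nonzero element of $\HH_{\Gamma_x}^1(kG)$.

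The main obstacle is the middle step: checking that the paper's definition of $\Hrm^1_{\Gamma_x}$ really reduces, with trivial coefficients, to $\Gamma_x$-invariant homomorphisms, and is not, for example, a version where invariance is only up to coboundaries. It also has to be matched exactly with the conditions in Definition \ref{def41}. If the definition is phrased via a Sylow $p$-subgroup or via $p\mid |C_G(x):[C_G(x),C_G(x)]|$ together with a $\Gamma_x$-condition, I would first produce the required $\Gamma_x$-fixed $p$-quotient. Since $\Gamma_x$ may not be a $p'$-group, I would not use an averaging argument; I would take the quotient by $[C_G(x),\Gamma_x]$ directly.
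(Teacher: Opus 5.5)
Your strategy is the paper's. The nonzero class is a $\Gamma_x$-invariant homomorphism $C_G(x)\to\mathbb{F}_p\subseteq k$, pushed into $\HH^1_{\Gamma_x}(kG)$ by the injective map $\nu^1_{\Gamma_x,G}$ of Proposition \ref{prop33}. The paper takes a surjection $f\colon C_G(x)\to\mathbb{F}_p$ with $\Gamma_xC_G(x)\le\Ker f$ and rescales it by some $a\in k^{\times}$; your linear functional on the elementary abelian quotient is the same construction.

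As written, however, your first step is not justified, because the injection is not unconditional. Proposition \ref{prop33} assumes a system $\gamma_x$ of coset representatives satisfying (\ref{eq33}) that is $\Gamma_x$-stable. The paper needs this (Proposition \ref{prop32}(ii)) to show that $\nu^1_{G,x}$ sends $\Gamma_x$-invariant cochains to $\Gamma_x$-invariant ones. The fact that $\Gamma_x$ fixes $x$ does not supply it. In the paper this hypothesis is exactly the third clause of Definition \ref{def41}, so you must invoke it explicitly.

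The other two clauses of Definition \ref{def41} are $p\mid |C_G(x):\Gamma_xC_G(x)|$ and $(C_G(x))'\le \Gamma_xC_G(x)$. Here $\Gamma_xC_G(x)$ is the normal subgroup generated by the elements $h\,{}^{\sigma}(h^{-1})$, which are the same generators as your $g^{-1}\,{}^{\sigma}g$ with $g=h^{-1}$. Together the two clauses say that the $\Gamma_x$-coinvariant abelianization $C_G(x)/\Gamma_xC_G(x)$ has order divisible by $p$. That is your second guessed formulation, so your quotient argument applies verbatim.

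Your worry about the middle step is unfounded. Subsection \ref{subsec21} defines $\Hrm^*_{\Gamma}$ through the subcomplex of strictly $\Gamma$-invariant cochains, and $\partial^0=0$ for trivial coefficients. Hence $\Hrm^1_{\Gamma_x}(C_G(x),k)$ is exactly the space of $\Gamma_x$-invariant homomorphisms $C_G(x)\to(k,+)$.
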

Let $X$ be a system of representatives of conjugacy classes in $G.$ For group algebras it is well known that we have an additive decomposition of the  Hochschild cohomology
    $$\HH^n(kG)\cong \bigoplus_{x\in X}\Hrm^n(C_G(x),k),$$
where $kG$ is the group algebra and $k$ is the trivial $kC_G(x)$-module. This decomposition goes back to Burghelea \citep{Bu}. Section \ref{sec3} more precisely Proposition \ref{prop33}, can be viewed  as a first step to obtain a similar decomposition for equivariant Hochschild cohomology; this is left for a future project.

For the second main result, we continue  with notations of Subsection \ref{subsec22}, that is $A$ a $\Gamma$-algebra and $M$ is a $\Gamma$-equivariant $(A-A)$-bimodule. The objective of the first part of Section \ref{sec6} is to present an alternative approach to the $\Gamma$-equivariant Hochschild cohomology in a similar way as suggested by Inassaridze in \cite[Section 5]{In}; more precisely, we reinterpret Subsection \ref{subsec22} as in \cite[Definition 5.1]{In}, but for the cohomological case. The action of $\Gamma$ on $C^*(A,M)$ (where $C^*(A,M)$ is the Hochschild cochain complex of $A$ with coefficients in $M,$ see (\ref{eq22})) is induced by the action of $\Gamma$ on $A$ and on $M$ and, in this case the $\Gamma$-equivariant Hochschild cohomology is
$$\HH_{\Gamma}^*(A,M)=\Hrm_{\Gamma}^*(C^*(A,M)),$$
see Definition \ref{defn61}. The enveloping algebra $A^e$ of $A$ (i.e. $A^e=A\otimes A^{op}$) remains a $\Gamma$-algebra by
$$(a\otimes b)(c\otimes d)=ac \otimes db,\quad {}^{\sigma}(a\otimes b)={}^{\sigma}a\otimes {}^{\sigma}b,$$
 for any $ a,b,c,d\in A, \sigma \in \Gamma.$  The enveloping algebra of the $\Gamma$-algebra $A$  is introduced in \citep{KoPi} in the context of the so-called "oriented" algebras and is denoted by $(A,\Gamma)^e.$ As a vector space $(A,\Gamma)^e$ is $A\otimes k\Gamma\otimes A$, with the multiplication
$$(a\otimes \sigma \otimes b)\cdot (c\otimes \tau \otimes d)=a({}^{\sigma}c)\otimes \sigma \tau \otimes ({}^{\sigma}d) b,$$
 for any $a,b,c,d\in A, \sigma,\tau \in \Gamma.$ From this reference \citep{KoPi} we shall use the notations and some results; we are not interested in the oriented aspect, only in associative $G$-algebras (with trivial orientation). In Lemma \ref{lem71} (i) we will see that $(A,\Gamma)^e\cong A^e \star \Gamma,$ as $\Gamma$-algebras; here $A^e \star \Gamma$ is the skew group algebra of the $\Gamma$-algebra $A^e$ with the multiplication recalled in (\ref{eq61}). Thus,  we identify  $k\Gamma$ with a subalgebra of $A^e\star \Gamma.$ This identification is given by
$$\sigma \mapsto (1_A\otimes 1_A)\star\sigma,\quad \sigma\in \Gamma.$$
Using Subsection \ref{subsec23} we define the relative $\Ext$ in this case by $\Ext_{(A,\Gamma)^e,k\Gamma}^*(A,M).$

The first statement of the second main theorem says that the $\Gamma$-equivariant Hochschild cohomology of $\Gamma$-algebras with coefficients in a $\Gamma$-equivariant bimodule admits an interpretation as a $k\Gamma$-relative $\Ext.$ In the second statement we give a natural map between the two types of $\Gamma$-equivariant Hochschild cohomology (as defined in \cite{KoPi} and the one recalled in Subsection \ref{subsec22}). We also establish that these two types of $\Gamma$-equivariant Hochschild cohomology are the same if $k\Gamma$ is a separable $k$-algebra.

\begin{theorem}\label{thm12}
Let $A$ be a $\Gamma$-algebra and $M$ be a $\Gamma$-equivariant $(A-A)$-bimodule.
\begin{itemize}
\item[i)] There is a $k$-linear isomorphism
$$\HH_{\Gamma}^n(A,M)\cong \Ext_{(A,\Gamma)^e,k\Gamma}^n (A,M).$$
\item[ii)] There is a natural map $$\HH_{\Gamma}^n(A,M) \rightarrow \Hrm_{\Gamma}^n(A,M),
$$ which is a $k$-linear isomorphism if $k\Gamma$ is a separable $k$-algebra.
\end{itemize}
\end{theorem}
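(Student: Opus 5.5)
For part i) the plan is to identify the complex computing $\HH_{\Gamma}^*(A,M)$ with the complex computing the relative $\Ext$ coming from a $k\Gamma$-relatively projective resolution of $A$. Following Definition \ref{defn61}, $\HH_{\Gamma}^*(A,M)$ is the cohomology of the subcomplex of $\Gamma$-invariant Hochschild cochains, i.e.\ $\Gamma$-equivariant multilinear maps $A^{\otimes n}\to M$, or possibly the equivariant cohomology $\Hrm_{\Gamma}^*(C^*(A,M))$ in the sense of Subsection \ref{subsec21}; I will first check that in the Jensen-style definition of Subsection \ref{subsec22} this reduces to the invariant cochain complex $C^*(A,M)^{\Gamma}$.

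On the relative side, I will use the bar resolution $\Barc_n(A)=A\otimes A^{\otimes n}\otimes A$ with its usual differential. Through Lemma \ref{lem71} (i), $(A,\Gamma)^e\cong A^e\star\Gamma$, and I regard it as an $(A,\Gamma)^e$-module with $A^e$ acting on the outer factors and $\Gamma$ acting diagonally. The steps are as follows.
\begin{itemize}
\item[(a)] Show that $\Barc_n(A)\cong (A,\Gamma)^e\otimes_{k\Gamma}A^{\otimes n}$ as $(A,\Gamma)^e$-modules. This holds because $A^e\star\Gamma\otimes_{k\Gamma}V\cong A^e\otimes V$ with diagonal $\Gamma$-action, so each term is $((A,\Gamma)^e,k\Gamma)$-projective in the sense of Subsection \ref{subsec23}.
\item[(b)] Show that the augmented bar complex is $(A,\Gamma)^e$-linear and $k\Gamma$-split. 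The standard contracting homotopy $s(a_0\otimes\cdots\otimes a_{n+1})=1\otimes a_0\otimes\cdots\otimes a_{n+1}$ commutes with the $\Gamma$-action because ${}^\sigma 1=1$, so it is $k\Gamma$-linear.
\item[(c)] Apply $\Hom_{(A,\Gamma)^e}(-,M)$ and use the adjunction
$$\Hom_{(A,\Gamma)^e}\bigl((A,\Gamma)^e\otimes_{k\Gamma}A^{\otimes n},M\bigr)\cong \Hom_{k\Gamma}(A^{\otimes n},M)=C^n(A,M)^{\Gamma}.$$
Then check that the induced differential is the Hochschild differential.
\end{itemize}
The relative comparison theorem then gives part i).

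For part ii), the second variant $\Hrm^*_{\Gamma}(A,M)$ of \citep{KoPi} is the total cohomology of the bicomplex $C^p(\Gamma,C^q(A,M))$. The natural map comes from the inclusion $C^q(A,M)^{\Gamma}=C^0(\Gamma,C^q(A,M))\hookrightarrow \mathrm{Tot}$, which is a cochain map because invariant cochains are $\Gamma$-cocycles in degree $0$. Equivalently, by \cite{KoPi} this is $\Ext_{(A,\Gamma)^e}^*(A,M)$, and the map is the canonical one from relative to absolute $\Ext$.

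If $k\Gamma$ is separable, then every $k\Gamma$-module is projective, so $\Hrm^p(\Gamma,C^q(A,M))=0$ for $p>0$. The spectral sequence of the bicomplex, filtered by the $q$-index, then collapses to the column $(C^*(A,M))^{\Gamma}$, and the inclusion is a quasi-isomorphism. Equivalently, over a separable $k\Gamma$ every exact sequence of $(A,\Gamma)^e$-modules is $k\Gamma$-split, so relative and absolute $\Ext$ coincide.

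The main obstacle is matching the definitions exactly. This means confirming that the $\Gamma$-equivariant Hochschild cohomology of Subsection \ref{subsec22} and Definition \ref{defn61} really is the cohomology of invariant cochains, and that the bicomplex of \citep{KoPi} has exactly the shape described above. If instead $\HH_\Gamma$ is defined through equivariant cohomology in the sense of \cite{In05}, I would first prove that it reduces to invariants, which is the step in (c) I would verify with the most care.
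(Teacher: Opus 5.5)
Your proposal is correct. Part i) follows the paper's argument. The paper also computes the relative $\Ext$ with $\Barc_*(A)$ and identifies $\Hom_{(A,\Gamma)^e}(\Barc_*(A),M)$ with $C^*(A,M)^{\Gamma}$ (Lemma \ref{lem71} (iii)), but it cites Inassaridze for relative projectivity and $k\Gamma$-exactness and Witherspoon for the cochain identification. Your steps (a)--(c) actually prove these: the induced-module description $(A,\Gamma)^e\otimes_{k\Gamma}A^{\otimes n}$, the $\Gamma$-equivariant contracting homotopy, and the induction adjunction. The ``main obstacle'' you flag is not one: by Subsection \ref{subsec22} and Definition \ref{defn62}, $\HH^*_{\Gamma}(A,M)$ is by definition the cohomology of the equivariant, i.e.\ invariant, cochains.

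For part ii) your primary argument genuinely differs from the paper's. The paper never opens the bicomplex of \cite{KoPi}. It quotes \cite[Theorem 6.1]{KoPi}, $\Hrm^n_{\Gamma}(A,M)\cong\Ext^n_{(A,\Gamma)^e}(A,M)$, and takes the natural map to be the relative-to-absolute comparison (\ref{eqSha1}) for $k\subseteq k\Gamma$. It then gets the isomorphism from Gerstenhaber--Schack (\ref{eqSha2}); this is exactly your ``equivalently'' sentence. You instead realize the map on cochains as the inclusion of invariants into the $p=0$ column of the total complex. You then show it is a quasi-isomorphism because $\Hrm^p(\Gamma,-)$ vanishes for $p>0$, so the first-quadrant spectral sequence collapses. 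Your route is more self-contained and makes the map explicit. However, it depends on the bicomplex of \cite[Definition 5]{KoPi} having exactly the shape $C^p(\Gamma,C^q(A,M))$ with the induced action. The paper's route is shorter and does not depend on that, at the price of two black-box theorems. One small slip: the invariants are $\Hrm^0(\Gamma,C^q(A,M))\subseteq C^0(\Gamma,C^q(A,M))$, not equal to $C^0$.
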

In the second part of Section \ref{sec6} we prove the above theorem.  We shall use the notations: $G^{\times n},$ which means $G\times \cdots \times G$ where $G$ appears $n$ times; $A^{\otimes n},$ which means $A\otimes \cdots \otimes A$ where $A$ appears $n$ times and $\otimes$ is $\otimes_k$.

\section{Reminder on  equivariant cohomology of groups, equivariant Hochschild cohomology and relative homological algebra.}\label{sec2}
Let $\Gamma$ and $G$ be finite groups, $k$ be a field,  and let $n$ be a non-negative integer.
\subsection{\texorpdfstring{\textit{ The  $\Gamma$-equivariant cohomology of groups.}}{The Gamma-equivariant cohomology of groups.}}\label{subsec21}

Let $N$ be a $\Gamma$-equivariant $kG$-module. The $\Gamma$-\textit{equivariant} $n$-th cohomology of $G$ with coefficients in $N$ is
\[\mathrm{H}_{\Gamma}^n(G,N)=\mathrm{H}^n(C_{\Gamma}^*(G,N))=\Ker \partial^n/\Ima \partial^{n-1},\]
where the cochain complex $C_{\Gamma}^*(G,N)$ is given by
\begin{equation}\xymatrix{&C_{\Gamma}^0(G,N)\ar[r]^{\partial^0}&\ldots \ar[r]&C_{\Gamma}^{n-1}(G,N)\ar[r]^{\partial^{n-1}}\ar[r]&C_{\Gamma}^n(G,N)\ar[r]^{\partial^n}
\ar[r]&C_{\Gamma}^{n+1}(G,N)\ldots }
\end{equation}
and  $C_{\Gamma}^n(G,N)$ is the $k$-space of all $\Gamma$-maps $\varphi:G^n\rightarrow N.$ Clearly $C_{\Gamma}^n(G,N)$ is the $k$-subspace of $\Gamma$-invariant maps of $C^n(G,N),$ where $C^n(G,N)$ is in fact the $k$-space $\mathrm{Maps}(G^n,N)$ and, the action of $\Gamma$ is $$\Gamma\times C^n(G,N)\rightarrow C^n(G,N) \quad (\sigma,f)\mapsto {}^{\sigma}f;\quad {}^{\sigma}f:G^{\times n}\rightarrow N$$
          $$ {}^{\sigma}f(g_1,\dots,g_n)={}^{\sigma}(f({}^{\sigma^{-1}}g_1,\dots,{}^{\sigma^{-1}}g_n)),\quad n \ge 1.$$ In the above, the differentials are defined by
\[ \partial^n:C_{\Gamma}^n(G,N)\to C_{\Gamma}^{n+1}(G,N)\]
\[(\varphi:G^{\times n}\to N )\mapsto (\partial^n(\varphi):G^{\times (n+1)}\to N)\]
\begin{align*}
\partial^n(\varphi)(g_1,\ldots , g_{n+1})&=g_1 \varphi(g_2, \ldots , g_{n+1})+
\sum_{j=1}^n(-1)^j \varphi(g_1, \ldots , g_jg_{j+1}, \ldots, g_{n+1})\\
&+(-1)^{n+1}\varphi (g_1,\ldots, g_n),
\end{align*}
for any $g_1, \ldots, g_{n+1}\in G.$ If $n=0$ then $$C_{\Gamma}^0(G,N)=N^{\Gamma}=\{m\in N \mid {}^{\sigma}m=m,\mbox{ for any }\sigma\in \Gamma\}.$$

\subsection{\textit{The equivariant Hochschild cohomology.}}\label{subsec22}

Let $M$ be a $k\Gamma$-module and $A$ be a $k$-algebra, such that there exists an action of $\Gamma$ on $A$ by $k$-automorphisms. This means that $A$ is a $\Gamma$-algebra with the $\Gamma$-action  $\Gamma \times A\rightarrow A$ denoted by $$(\sigma,a)\mapsto {}^{\sigma}a, \quad  (\sigma,a)\in \Gamma \times A.$$ The left  linear action of $\Gamma$ on $M$ is denoted by $$(\sigma,m)\mapsto {}^{\sigma} m,  \quad (\sigma,m)\in \Gamma \times M.$$
The same notation for the action of $\Gamma$ on $M$ is used as in the case of the action on $A$, but the reader should understand the difference from the context.
 We assume that $M$ is a $\Gamma$-equivariant $(A-A)$-bimodule. With the above notations, using  \cite[Section 2]{Gou}, we define  the $n$-th $\Gamma$-\textit{equivariant Hochschild cohomology}, that is
$$\HH_{\Gamma}^n(A,M)=\mathrm{H}^n(C_{\Gamma}^*(A,M))=\Ker  \delta^n/\Ima \delta^{n-1},$$
where $C_{\Gamma}^*(A,M)$ is a cochain complex given by

$$ C_{\Gamma}^n(A,M)=\{\varphi\in C^n(A,M)\mid \ \varphi({}^{\sigma}a_1 \otimes \dots \otimes^{\sigma}a_n)={}^{\sigma}(\varphi(a_1\otimes\dots\otimes a_n)), \mbox{ for any } \sigma\in \Gamma\}.$$
Recall that $C^n(A,M)$ is the $n$-th  space Hochschild cochains of the algebra $A$ with coefficients in the bimodule $M,$ defining the classical  Hochschild  cochain complex $C^*(A,M):$

\begin{equation}\label{eq22}\xymatrix{&C^0(A,M)\ar[r]^{\delta^0}&\ldots \ar[r]&C^{n-1}(A,M)\ar[r]^{\delta^{n-1}}\ar[r]&C^n(A,M)\ar[r]^{\delta^n}\ar[r]&C^{n+1}(A,M)\ldots}
\end{equation}
where $$C^n(A,M)=\{f\mid f:A^{\otimes n}\rightarrow M,\mbox{ } f \mbox{ is } k-\mbox{linear}\},$$ and $\delta^n$ is defined below. Clearly $C_{\Gamma}^n(A,M)$ consists of all $n$-cochains, that are $\Gamma$-equivariant, and it is a $k$-subspace of $C^n(A,M).$ By the easy implication $\varphi\in C_{\Gamma}^n(A,M)$ then $\delta^n(\varphi)\in C_{\Gamma}^{n+1}(A,M),$ it follows that $\{C_{\Gamma}^n(A,M),\delta^n\}_{n\geq 0}$ is a cochain subcomplex of the Hochschild cochain complex. For $n=0$ the space $C_{\Gamma}^0(A,M)$ is identified with $M^{\Gamma}$.
Explictly, the cochain complex $C_{\Gamma}^*(A,M)$ is
\begin{equation}\xymatrix{&C_{\Gamma}^0(A,M)\ar[r]^{\delta^0}&\ldots \ar[r]&C_{\Gamma}^{n-1}(A,M)\ar[r]^{\delta^{n-1}}\ar[r]&C_{\Gamma}^n(A,M)\ar[r]^{\delta^n}\ar[r]&C_{\Gamma}^{n+1}(A,M)\ldots}
\end{equation}
\[ \delta^n:C_{\Gamma}^n(A,M)\to C_{\Gamma}^{n+1}(A,M)\]
\[(\varphi:A^{\otimes n}\to M )\mapsto (\delta^n(\varphi):A^{\otimes (n+1)}\to M)\]
\begin{align*}
\delta^n(\varphi)(a_1\otimes\ldots \otimes a_{n+1}) &= a_1\cdot \varphi(a_2\otimes \ldots \otimes a_{n+1}) \\
&\quad + \sum_{j=1}^n(-1)^j \varphi(a_1\otimes\ldots \otimes a_j\cdot a_{j+1}\otimes \ldots \otimes a_{n+1}) \\
&\quad + (-1)^{n+1} \varphi (a_1\otimes\ldots \otimes a_n)\cdot a_{n+1},
\end{align*}
for all $a_1, \ldots, a_{n+1}\in A$, if $n>0$.

In the case $n=0$ the differential $\delta^0:M\to C^{1}(A,M)$ is given by
$$\delta^0(m)(a_1)=a_1\cdot m-m\cdot a_1$$ for any $m\in M, a_1\in A.$ It follows that
$$\HH_{\Gamma}^0(A,M)\cong\{m\in M^{\Gamma} | am=ma\  \text{for all}\ a\in A \}.$$

\subsection{\textit{Relative homological algebra.}}\label{subsec23}
In this subsection we shall use the following notations: $k$ is a field, $A$ is a $k$-algebra and, $B$ is  a $k$-subalgebra of $A.$ The main source goes back to Hochschild \cite[Section 2]{Ho}. An exact sequence of $A$-modules
$$
\xymatrix{
& \ldots \ar[r]
& M_{i+1} \ar[r]^{d_{i+1}}
& M_i \ar[r]^{d_i}
& M_{i-1} \ar[r]
& \ldots
}$$
is $(A,B)$-exact if $\Ker d_i$ is a direct summand (as $B$-module) of $M_i$, $i\in \Z.$ Clearly, a chain complex $(M_*,d_*)$ is $(A,B)$-exact, if there is a sequence of $B$-module homomorphisms $h_i:M_i\rightarrow M_{i-1}$, $i \in \Z$ giving a $B$-homotopy. Some $A$-module $P$ is relative $B$-projective ( also called $(A,B)$-projective) if for all $(A,B)$-exact sequences
$$
\xymatrix{
& 0 \ar[r]
& K \ar[r]^{i}
& L \ar[r]^{\pi}
& M \ar[r]
& 0
}$$ and for all $A$-module homomorphisms $g:P\rightarrow M$ there is $g':P\rightarrow L$ an $A$-module homomorphism such that $g=\pi \circ g'.$
A relative $B$-projective resolution $P_{*}$ of an $A$-module $M$ is an $(A,B)$-exact sequence
$$\dots \rightarrow P_{n+1}\rightarrow P_n\rightarrow P_{n-1}\rightarrow \dots \rightarrow P_1\rightarrow P_0\rightarrow M\rightarrow 0,$$
with each $P_i,$ $i\in \N$ being relative $B$-projective. Using a Comparison Theorem (which is known to exist in relative homological algebra) the relative derived left functor is defined in the following way: let $M$, $N$ be two $A$-modules and $${}_AP_{*}\rightarrow {}_AM\rightarrow 0$$ be a relative $B$-projective resolution of $M$. We obtain the cochain complex
$$0\rightarrow\Hom_A(M,N)\rightarrow\Hom_A(P_*,N)$$
and, the $B$-relative $\Ext$ is $\Ext_{A,B}^*(M,N),$ given by
$$
\Ext_{A,B}^n(M,N)
= \Ker(\Hom_A(d_{n+1},N)) / \operatorname{Im}(\Hom_A(d_n,N)),
\quad n \in \mathbb{N}
$$
where
$$
P_{n+1} \xrightarrow{d_{n+1}} P_n \xrightarrow{d_n} P_{n-1}
$$
are terms from the above relative $B$-projective resolution.
When $B=k$ relative $k$-projectivity is the same as $A$-projectivity and $\Ext_{A,k}^*(M,N)$ is the classical $\Ext_A^*(M,N).$
If $B'$ and $B$ ar $k$-subalgebras of $A,$ such that $B'\subseteq B,$ then there is a natural map, for any $n\in \N,$
\begin{equation} \label{eqSha1}
\Ext_{A,B}^n(M,N)\rightarrow \Ext_{A,B'}^n(M,N).
\end{equation}
Gerstenhaber and Shack \cite[Theorem 1.2]{GeSc} show that the natural map
\begin{equation} \label{eqSha2}
\Ext_{A,B}^*(M,N)\rightarrow \Ext_{A}^*(M,N)
\end{equation}
is an isomorphism, if $B$ is $k$-separable.

\section{An emebdding into equivariant Hochschild cohomology of group algebras.}\label{sec3}
We continue with the notations used in the Introduction. Let $n\geq 0, n\in \Z$ and $x\in G.$ For the additive decomposition $\HH^n(kG)$ in a direct sum of group cohomology with trivial coefficients, there is a recent explicit description \citep{LiZh}, and we shall use the maps presented in \cite[Section 5]{CoTo}.
 We have:
\begin{align}\label{eq31}
    \pi_{G,x}^n: C^n(kG,kG)\rightarrow C^n(C_G(x),kx) \nonumber \\ (\varphi:(kG)^{\otimes n}\rightarrow kG)\mapsto (\pi_{G,x}^n(\varphi):(C_G(x))^{\times n}\rightarrow kx) \nonumber \\  \pi_{G,x}^n(\varphi)(h_1,h_2,\dots,h_n)=a_{1,x}x,
    \end{align}
where $a_{1,x}$ is the coefficient of $x$ in $\varphi(h_1\otimes \dots \otimes h_n)h_n^{-1}h_{n-1}^{-1} \dots  h_1^{-1},$
for any $h_1,h_2,\dots,h_n\in C_G(x).$

  \begin{align}\label{eq32}\nu_{G,x}^n:C^n(C_G(x),kx)\rightarrow C^n(kG,kG) \nonumber \\
  (\psi:(C_G(x))^{\times n}\rightarrow kx)\mapsto (\nu_{G,x}(\psi):(kG)^{\otimes n}\rightarrow kG) \nonumber \\
   \nu_{G,x}^n(\psi)(g_1\otimes \dots \otimes g_n)=\sum_{j=1}^{n_x}\psi(h_{j,1},\dots,h_{j,n})x^{-1}x_j g_1 \dots g_n,
   \end{align}
for any $g_1,\ldots,g_n\in G,$ and $\psi\in C^n(C_G(x),kx).$ Let $n_x$ be the cardinal of the conjugacy class $C_x=\{^gx\mbox{ }|\mbox{ }g\in G\}.$ In the above lines $h_{j,1},\ldots,h_{j,n}$ are determined by the elements $g_1,g_2,\ldots,g_n,$ and $x_1,x_2,\ldots, x_{n_x}$ in the following way: consider   the coset decomposition
   $$G=C_G(x)\gamma_{1,x}\cup \dots \cup C_G(x)\gamma_{n_x,x}=\gamma_{1,x}^{-1} C_G(x)\cup \dots \cup \gamma_{n_x,x}^{-1} C_G(x),$$
such that $$C_x=\{x, \gamma_{2,x}^{-1}x\gamma_{2,x},\ldots,\gamma_{n_x,x}^{-1}x\gamma_{n_x,x}\}.$$
We adopt the notation $x_j=\gamma_{j,x}^{-1}x\gamma_{j,x},$ for any $j\in \{1,\ldots,n_x\}$ and $\gamma_{1,x}=1.$ For each $j\in \{1,\ldots,n_x\}$ we obtain:

  \begin{equation}\label{eq33}
  \gamma_{j,x}g_1=h_{j,1}\gamma_{s_j^1,x};\  \gamma_{s_j^1,x}g_2=h_{j,2}\gamma_{s_j^2,x};\ \dots\ ;\gamma_{s_j^{n-1},x}g_n=h_{j,n}\gamma_{s_j^n,x};
  \end{equation}
where $s_j^1,\ldots,s_j^n\in\{1,\ldots,n_x\}.$
 We shall use the notation $\gamma_x=\{\gamma_{1,x},\ldots,\gamma_{n_x,x}\}$ for the above system of representatives of left cosets of $C_G(x)$ in $G,$ with  $\gamma_x^{-1}=\{\gamma_{1,x}^{-1},\ldots,\gamma_{n_x,x}^{-1}\}$ being  a system of representatives of right cosets of $C_G(x)$ in $G.$ Note that the definition of $\nu_{G,x}^n$ is dependent on $\gamma_x.$

For any $\sigma \in\Gamma_x,$ since ${}^{\sigma}C_G(x)=C_G(x)$ and $^{\sigma}C_x=C_x,$ it follows that ${}^{\sigma} \gamma_x:=\{{}^{\sigma}\gamma_{1,x},\ldots,{}^{\sigma}\gamma_{n_x,x}\}$ is a system of representatives of left cosets of $C_G(x)$ in $G,$ such that
    $$G={}^{\sigma}(\gamma_{1,x}^{-1}) C_G(x)\cup \dots \cup {}^{\sigma}( \gamma_{n_x,x}^{-1}) C_G(x),$$ and
    $$C_x=\{x, {}^{\sigma}(\gamma_{2,x}^{-1})x{}^{\sigma}(\gamma_{2,x}),\ldots,{}^{\sigma}(\gamma_{n_x,x}^{-1})x{}^{\sigma}(\gamma_{n_x,x})\}.$$
    Moreover, for each $j\in \{1,\ldots,n_x\},$ by applying $\sigma$ on the formulas in (\ref{eq33}), we obtain:

  \begin{equation}\label{eq33'} {}^{\sigma}\gamma_{j,x}\mbox{ }^{\sigma}g_1={}^{\sigma}h_{j,1}\mbox{ }^{\sigma}\gamma_{s_j^1,x}; {}^{\sigma}\gamma_{s_j^1,x}\mbox{ }^{\sigma}g_2=
    {}^{\sigma}h_{j,2}\mbox{ }^{\sigma}\gamma_{s_j^2,x};
    \dots;{}^{\sigma}\gamma_{s_j^{n-1},x}\mbox{ }^{\sigma}g_n={}^{\sigma}h_{j,n}\mbox{ }^{\sigma}\gamma_{s_j^n,x}.
   \end{equation}

\begin{proposition}\label{prop32} Let $G$ be a finite $\Gamma$-group and $x\in G$. In particular $kG$ becomes a $\Gamma$-algebra and we consider $kx$ as a trivial  $\Gamma_x$-equivariant $kG$-module.  The following statements hold:
\begin{itemize}
\item[i).] $\pi_{G,x}^n(C_{\Gamma_x}^n(kG,kG))\subseteq C_{\Gamma_x}^n(C_G(x),kx);$
\item[ii).]  If there is $\gamma_x$ satisfying (\ref{eq33}) such that $\gamma_x$ is $\Gamma_x$-stable then $\nu_{G,x}^n(C_{\Gamma_x}^n(C_G(x),kx))\subseteq C_{\Gamma_x}^n(kG,kG).$
\end{itemize}
\end{proposition}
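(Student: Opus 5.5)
Both inclusions will be checked directly on cochains: take an equivariant cochain on one side, fix $\sigma\in\Gamma_x$, and compare the image cochain evaluated at ${}^{\sigma}$-translated arguments with the $\sigma$-translate of its value. Throughout we use that $\Gamma$ acts on $kG$ by linearly extending the action on $G$, so that ${}^{\sigma}(\sum_g a_g g)=\sum_g a_g\,{}^{\sigma}g$. In particular, for $\sigma\in\Gamma_x$ the coefficient of ${}^{\sigma}x=x$ in ${}^{\sigma}y$ equals the coefficient of $x$ in $y$, for any $y\in kG$. Since $kx$ carries the trivial $\Gamma_x$-action, an element $\psi\in C^n(C_G(x),kx)$ lies in $C^n_{\Gamma_x}(C_G(x),kx)$ exactly when $\psi({}^{\sigma}h_1,\dots,{}^{\sigma}h_n)=\psi(h_1,\dots,h_n)$ for all $\sigma\in\Gamma_x$. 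Note also that $\Gamma_x$ stabilizes $C_G(x)$, so these arguments make sense.

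For i), let $\varphi\in C^n_{\Gamma_x}(kG,kG)$, $\sigma\in\Gamma_x$, and $h_1,\dots,h_n\in C_G(x)$.
\begin{itemize}
\item By equivariance, $\varphi({}^{\sigma}h_1\otimes\cdots\otimes{}^{\sigma}h_n)\,({}^{\sigma}h_n)^{-1}\cdots({}^{\sigma}h_1)^{-1}={}^{\sigma}\bigl(\varphi(h_1\otimes\cdots\otimes h_n)\,h_n^{-1}\cdots h_1^{-1}\bigr)$, because $\sigma$ acts on $kG$ by an algebra automorphism.
\item Taking coefficients of $x$ and using the remark above gives $\pi^n_{G,x}(\varphi)({}^{\sigma}h_1,\dots,{}^{\sigma}h_n)=\pi^n_{G,x}(\varphi)(h_1,\dots,h_n)$.
\end{itemize}
This is exactly $\Gamma_x$-invariance of $\pi^n_{G,x}(\varphi)$.

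For ii), assume $\gamma_x$ is $\Gamma_x$-stable, meaning ${}^{\sigma}\gamma_x=\gamma_x$ as sets. Let $\psi$ be $\Gamma_x$-invariant and $g_1,\dots,g_n\in G$; by linearity it suffices to check group elements.
\begin{itemize}
\item For each $\sigma\in\Gamma_x$ there is a permutation $\tau$ of $\{1,\dots,n_x\}$ with ${}^{\sigma}\gamma_{j,x}=\gamma_{\tau(j),x}$.
\item Hence ${}^{\sigma}x_j=({}^{\sigma}\gamma_{j,x})^{-1}x\,{}^{\sigma}\gamma_{j,x}=x_{\tau(j)}$.
\item By (\ref{eq33'}), the decomposition (\ref{eq33}) for the arguments ${}^{\sigma}g_1,\dots,{}^{\sigma}g_n$ starting at index $\tau(j)$ yields exactly the elements ${}^{\sigma}h_{j,1},\dots,{}^{\sigma}h_{j,n}$, with indices $\tau(s^i_j)$. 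This uses uniqueness of the coset decomposition $\gamma g=h\gamma'$ once the representatives are fixed.
\end{itemize}
Therefore
$$\nu^n_{G,x}(\psi)({}^{\sigma}g_1\otimes\cdots\otimes{}^{\sigma}g_n)=\sum_{j}\psi({}^{\sigma}h_{j,1},\dots,{}^{\sigma}h_{j,n})\,x^{-1}x_{\tau(j)}\,{}^{\sigma}(g_1\cdots g_n),$$
after reindexing the sum by $\tau$. Invariance of $\psi$ replaces $\psi({}^{\sigma}h_{j,\cdot})$ by $\psi(h_{j,\cdot})$. Here $\psi$ takes values in $kx$, so each term is a scalar times $x$; write $\psi(\dots)=c\,x$ with $c\in k$. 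Then
$$c\,x\,x^{-1}x_{\tau(j)}\,{}^{\sigma}(g_1\cdots g_n)=c\,{}^{\sigma}\bigl(x_j\,g_1\cdots g_n\bigr)={}^{\sigma}\bigl(c\,x\,x^{-1}x_jg_1\cdots g_n\bigr),$$
using ${}^{\sigma}x=x$. Summing over $j$ gives ${}^{\sigma}\bigl(\nu^n_{G,x}(\psi)(g_1\otimes\cdots\otimes g_n)\bigr)$, which is the required equivariance.

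The main obstacle is the bookkeeping in ii): we must confirm that the decomposition data $(h_{j,i}, s^i_j)$ for the translated arguments are precisely the $\sigma$-translates under the permutation $\tau$. This rests on $\Gamma_x$-stability of the chosen transversal, together with the fact that $\gamma_{1,x}=1$ is fixed by $\sigma$. Part i) is a routine coefficient comparison.
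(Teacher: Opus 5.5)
Your proposal is correct and follows the paper's proof essentially step for step. In i) you compare coefficients of $x$ using ${}^{\sigma}x=x$ and the equivariance of $\varphi$; in ii) you permute the $\Gamma_x$-stable transversal, use uniqueness of the coset factorizations to identify the translated data with ${}^{\sigma}h_{j,i}$, and use ${}^{\sigma}x_j=x_{\tau(j)}$ (you rightly take $g_1,\dots,g_n\in G$ where the paper's proof slips and writes $C_G(x)$, and your remark about $\gamma_{1,x}=1$ is harmless but not needed).
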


\begin{proof}

\begin{enumerate}
\item[i).] We consider $$\varphi\in C_{\Gamma_x}^n(kG,kG),\sigma\in \Gamma_x,\quad h_1,\ldots,h_n\in C_G(x),$$
Then $${}^{\sigma}(\pi_{G,x}^n(\varphi)(h_1,\ldots,h_n))={}^{\sigma}(a_{1,x}x)=a_{1,x}x,$$
since  $kx$ is a trivial $\Gamma_x$-equivariant, $C_G(x)$-module.

The element $\pi_{G,x}^n(\varphi)(^{\sigma}h_1,\dots,^{\sigma}h_n)$ is the coefficient of $x$ in $$\varphi(^{\sigma}h_1\otimes\dots \otimes^{\sigma}h_n)^{\sigma}(h_n^{-1})\cdot \dots \cdot ^{\sigma}(h_1^{-1})=^{\sigma}(\varphi(h_1\otimes\dots \otimes h_n)h_n^{-1}\cdot \dots \cdot h_1^{-1})=a_{1,x}x,$$ since $\sigma\in \Gamma_x.$

\item[ii).] Let $\psi \in C_{\Gamma_x}^n(C_G(x),kx),$ $g_1,\dots,g_n\in C_G(x)$ and $\sigma \in \Gamma_x.$ Since $\gamma_x$ is $\Gamma_x$-stable it follows that the set ${}^\sigma\gamma_x$ is a permutation of $\gamma_x.$ Then, for each $j\in\{1,\ldots, n_x\}$ we have ${}^{\sigma}\gamma_{j,x}=\gamma_{i,x},$ where $i\in\{1,\ldots,n_x\}.$ It follows that the formulas (\ref{eq33'}) change to
$$\gamma_{i,x}\mbox{ }^{\sigma}g_1={}^{\sigma}h_{j,1}\gamma_{t_i^1,x}; \gamma_{t_i^1,x}\mbox{ }^{\sigma}g_2=
    {}^{\sigma}h_{j,2}\gamma_{t_i^2,x};
    \dots;\gamma_{t_i^{n-1},x}{ }^{\sigma}g_n={}^{\sigma}h_{j,n}\gamma_{t_i^n,x},$$
 where $t_i^1,\ldots,t_i^n\in\{1,\ldots,n_x\}.$
 The above correspondence $j\leftrightarrow i$ is bijective, where $i,j\in\{1,\ldots,n_x\};$  and, for each $i\in\{1,\ldots,n_x\},$ similarly to (\ref{eq33})  we have
 $$\gamma_{i,x}\mbox{ }^{\sigma}g_1=h'_{i,1}\gamma_{t_i^1,x}; \gamma_{t_i^1,x}\mbox{ }^{\sigma}g_2=
    h'_{i,2}\gamma_{t_i^2,x};
    \dots;\gamma_{t_i^{n-1},x}{ }^{\sigma}g_n=h_{i,n}\gamma_{t_i^n,x},$$
with the same $t_i^1,\ldots,t_i^n\in\{1,\ldots,n_x\}$ and $h'_{i,1},\ldots, h'_{i,n}\in C_G(x).$ It follows that when $j$ runs in $\{1,\ldots,n_x\}$ then $i$ runs in $\{1,\ldots,n_x\}$ and
$${}^{\sigma}h_{j,1}=h'_{i,1}, \ldots, {}^{\sigma}h_{j,n}=h'_{i,n}.$$
Next, from the above conditions and using (\ref{eq32}), we obtain that:
\begin{align*}
{}^{\sigma}(\nu_{G,x}^n(\psi)(g_1\otimes \dots \otimes g_n))&={}^{\sigma}\left(\sum_{j=1}^{n_x}\psi(h_{j,1},\dots,h_{j,n})x^{-1} x_j g_1 \cdot \dots \cdot g_n\right)\\
&=\sum_{j=1}^{n_x}\psi({}^{\sigma}h_{j,1},\dots,{}^{\sigma}h_{j,n})x^{-1}({}^{\sigma} x_j)  { }^{\sigma} g_1 \cdot \dots \cdot {}^{\sigma}g_n\\
     &=\sum_{i=1}^{n_x}\psi(h'_{i,1},\dots,h'_{i,n})x^{-1} x_i { }^{\sigma} g_1 \cdot \dots \cdot {}^{\sigma}g_n\\
     &=\nu_{G,x}^n(\psi)({}^{\sigma}g_1\otimes \dots \otimes {}^{\sigma} g_n),
 \end{align*}

since $\psi \in C_{\Gamma_x}(C_G(x),kx)$ and ${}^{\sigma}x_j=x_i.$
\end{enumerate}

\end{proof}
By Proposition \ref{prop32} we define  next the restrictions:
\begin{equation} \label{eq34}
\pi_{\Gamma_x,G}^n:C_{\Gamma_x}(kG,kG)\rightarrow C_{\Gamma_x}(C_G(x),k),
\end{equation}
 where $\pi_{\Gamma_x,G}^n$ is the  restriction of $\pi_{G,x}$ to $C_{\Gamma_x}(kG,kG);$
\begin{equation}  \label{eq35}
\nu_{\Gamma_x,G}^n:C_{\Gamma_x}(C_G(x),k)
     \rightarrow C_{\Gamma_x}(kG,kG),
\end{equation}
where $\nu_{\Gamma_x,G}$ is the restriction of $\nu_{G,x}^n$ to $C_{\Gamma_x}(C_G(x),k),$
if there is $\gamma_x$ satisfying (\ref{eq33}) such that $\gamma_x$ is $\Gamma_x$-stable.
\begin{proposition}\label{prop33} Let $n\in \mathbb{Z},$ $n\geq 0$ and $x\in G.$ We identify $kx$ with $k$ as trivial $\Gamma_x$-equivariant $kC_G(x)$-module. If there is $\gamma_x$ satisfying (\ref{eq33}) such that $\gamma_x$ is $\Gamma_x$-stable then there is an injective $k$-linear map of cohomology spaces
    $$\nu_{\Gamma_x,G}^n :  \Hrm_{\Gamma_x}^n(C_G(x),k)
     \rightarrow \HH_{\Gamma_x}^n(kG),$$
with its left inverse
    $$\pi_{\Gamma_x,G}^n:\HH_{\Gamma_x}^n(kG)\rightarrow \Hrm_{\Gamma_x}^n(C_G(x),k).$$
 \end{proposition}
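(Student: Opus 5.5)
We reduce to the classical, non-equivariant situation: Proposition~\ref{prop32} tells us the maps restrict to the $\Gamma_x$-fixed subcomplexes. From \cite[Section 5]{CoTo} (see also \citep{LiZh}), the maps $\pi_{G,x}^*$ and $\nu_{G,x}^*$ of (\ref{eq31}) and (\ref{eq32}) are cochain maps between $C^*(kG,kG)$ and $C^*(C_G(x),kx)$. There $kx$ is the $kC_G(x)$-module on which $h\in C_G(x)$ acts by conjugation, which is trivial since $h$ centralizes $x$; this is how we identify $kx$ with $k$. Moreover
$$\pi_{G,x}^n\circ \nu_{G,x}^n=\mathrm{id}_{C^n(C_G(x),kx)}$$
holds already at the cochain level.

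We first record the cochain-level facts. For the left-inverse identity we take $g_i=h_i\in C_G(x)$ in (\ref{eq33}). The term $j=1$ (with $\gamma_{1,x}=1$) gives $h_{1,i}=h_i$ and $s_1^i=1$. For $j\neq 1$, the terms $x^{-1}x_jh_1\cdots h_n$ have $x$-coefficient zero after multiplying by $(h_1\cdots h_n)^{-1}$, because $x^{-1}x_j\neq 1$. Hence $\pi\nu(\psi)=\psi$. We take the compatibility with differentials, $\delta^n\nu^n=\nu^{n+1}\partial^n$ and $\partial^n\pi^n=\pi^{n+1}\delta^n$, from \cite{CoTo}. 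If that reference only proves these identities up to homotopy, we instead verify them directly from (\ref{eq33}), using the cocycle-like transitivity of the coset decompositions.

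Next we restrict. By Proposition~\ref{prop32} i), $\pi_{G,x}^n$ maps $C^n_{\Gamma_x}(kG,kG)$ into $C^n_{\Gamma_x}(C_G(x),k)$. By ii), using the $\Gamma_x$-stable choice of $\gamma_x$, $\nu_{G,x}^n$ maps $C^n_{\Gamma_x}(C_G(x),k)$ into $C^n_{\Gamma_x}(kG,kG)$. The differentials of $C^*_{\Gamma_x}(kG,kG)$ and $C^*_{\Gamma_x}(C_G(x),k)$ are restrictions of $\delta^*$ and $\partial^*$. Therefore the restrictions $\pi^n_{\Gamma_x,G}$ and $\nu^n_{\Gamma_x,G}$ of (\ref{eq34}) and (\ref{eq35}) are cochain maps between the equivariant subcomplexes, and they satisfy $\pi_{\Gamma_x,G}\circ\nu_{\Gamma_x,G}=\mathrm{id}$ on cochains.

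Passing to cohomology, both maps induce $k$-linear maps
$$\nu^n_{\Gamma_x,G}:\Hrm^n_{\Gamma_x}(C_G(x),k)\to \HH^n_{\Gamma_x}(kG),\qquad \pi^n_{\Gamma_x,G}:\HH^n_{\Gamma_x}(kG)\to \Hrm^n_{\Gamma_x}(C_G(x),k),$$
and functoriality gives $\pi\circ\nu=\mathrm{id}$. Hence $\nu^n_{\Gamma_x,G}$ is injective with left inverse $\pi^n_{\Gamma_x,G}$. The case $n=0$ is handled the same way, using $C^0_{\Gamma_x}=(kG)^{\Gamma_x}$ and $k^{\Gamma_x}=k$.

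The main obstacle is securing the cochain-map property of $\nu_{G,x}^*$ at the strict cochain level, not merely in cohomology. The equivariant complexes are subcomplexes, so a statement holding only up to a non-equivariant homotopy would not descend. If \cite{CoTo} gives only a cohomology-level statement, the direct verification via (\ref{eq33}) is where the real work lies. The identity $\pi\nu=\mathrm{id}$ itself, by contrast, is a strict cochain-level fact by the computation above, so it poses no such difficulty.
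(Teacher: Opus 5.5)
Your proposal is correct and follows the paper's own route. The paper also restricts $\pi_{G,x}^*$ and $\nu_{G,x}^*$ to the $\Gamma_x$-invariant subcomplexes via Proposition~\ref{prop32}, obtaining (\ref{eq34}) and (\ref{eq35}), and it cites the computations in \cite[Lemma 5.2 a), d), e)]{CoTo} for the cochain-map property and for $\pi\circ\nu=\mathrm{id}$.

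Your hedge about homotopies is unnecessary: a direct check with (\ref{eq33}) shows that $\nu_{G,x}^*$ commutes with the differentials exactly, since it is the cochain-level Shapiro map for the chosen transversal. In your left-inverse computation, the $j\neq 1$ terms vanish because $\psi(h_{j,1},\dots,h_{j,n})x^{-1}x_j$ is a scalar multiple of $x_j\neq x$; the bare element $x^{-1}x_j$ you wrote is not the right one to look at.
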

\begin{proof}
This statement follows by applying  (\ref{eq34}), (\ref{eq35})  and the same computations as in the proofs of \cite[Lemma 5.2 a), d), e)]{CoTo}, see also \cite[Remark 5.4]{CoTo}.
\end{proof}

\section{\texorpdfstring{$C_{\Gamma,G}(p)$-good elements and proof of Theorem \ref{thm11}.}{C(Gamma,G,p)-good elements and proof of Theorem 1.1.}}\label{sec4}
Let  $\Omega$  be a finite group and $(H,\cdot)$ be a finite $\Omega$-group. The normal subgroup $\Omega H$ of $H,$ generated by  the set
$$[\Omega H]=\{h\cdot {}^{\sigma}(h^{-1})\mid h\in H, \sigma\in \Omega\}$$ is considered by Inassaridze in \citep{In}.
We consider the subgroup $[H,\Omega]$ of $H$ generated by $[\Omega H].$ In general,
$[H,\Omega]$ (that is  a subgroup of $\Omega H$) is not a normal subgroup of $H.$  Note that if $x\in H,$ then $\Omega_x$ acts on $C_H(x).$ We just mention the following easy, probably well-known result, giving a case for which the above subgroups are the same.
\begin{lemma}\label{lem41}
Let $H$ be a finite $\Omega$-group such that $(H,\cdot)$ is a subgroup of $(\Omega,\cdot)$ satisfying the property that if   $\sigma\in H,$ $h\in H,$ then $^{\sigma}h=\sigma\cdot h\cdot \sigma^{-1}.$ Then $$H'\leq [H,\Omega]=\Omega H.$$
\end{lemma}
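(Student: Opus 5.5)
The plan is to prove the two assertions of Lemma \ref{lem41} separately: first the equality $[H,\Omega]=\Omega H$, then the inclusion $H'\leq [H,\Omega]$. Throughout, we use the hypothesis in the form ${}^{\sigma}h=\sigma h\sigma^{-1}$ for $\sigma,h\in H$.

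\emph{Equality.} By definition $[H,\Omega]$ is the subgroup generated by $[\Omega H]$, and $\Omega H$ is the normal subgroup generated by the same set. So $[H,\Omega]\subseteq \Omega H$ always holds. For the reverse inclusion it suffices to show that $[H,\Omega]$ is normal in $H$; then it is a normal subgroup containing $[\Omega H]$, hence it contains $\Omega H$.

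\emph{Normality.} We show that $[\Omega H]$ is closed under conjugation by elements of $H$, which makes the subgroup it generates normal. Take a generator $h\,{}^{\sigma}(h^{-1})$ with $h\in H$, $\sigma\in\Omega$, and take $g\in H$. Set $\tau:=\sigma g^{-1}\in\Omega$; this makes sense because $H\leq \Omega$. We compute
\[
g\,h\,{}^{\sigma}(h^{-1})\,g^{-1}=(gh)\,{}^{\sigma}(h^{-1})\,g^{-1}.
\]
The hypothesis gives ${}^{g^{-1}}y=g^{-1}yg$, so ${}^{\tau}(y)={}^{\sigma}(g^{-1}yg)$ for $y\in H$, since the action is a homomorphism $\Omega\to\Aut(H)$. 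Applying this with $y=(gh)^{-1}=h^{-1}g^{-1}$ gives
\[
{}^{\tau}\big((gh)^{-1}\big)={}^{\sigma}(g^{-1}h^{-1}g^{-1}g)={}^{\sigma}(g^{-1}h^{-1}).
\]
This does not yet match the expression above, so we use a cleaner route. Write
\[
g\,h\,{}^{\sigma}(h^{-1})\,g^{-1}=\big(gh\,{}^{\sigma}((gh)^{-1})\big)\cdot\big({}^{\sigma}(g)\,g^{-1}\big),
\]
which holds because ${}^{\sigma}((gh)^{-1})\,{}^{\sigma}(g)={}^{\sigma}(h^{-1})$. The first factor lies in $[\Omega H]$. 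The second factor is ${}^{\sigma}(g)g^{-1}=\big(g\,{}^{\sigma}(g^{-1})\big)^{-1}$, the inverse of a generator. Hence the conjugate lies in $[H,\Omega]$, and $[H,\Omega]$ is normal. This identity uses only that $\Omega$ acts by automorphisms, so normality in fact holds without the special hypothesis; it is still worth checking carefully, since it is the key point.

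\emph{Derived subgroup.} Now we use the hypothesis genuinely. For $g,h\in H$, take $\sigma:=g\in H\leq\Omega$. Then ${}^{g}(h^{-1})=gh^{-1}g^{-1}$, so
\[
h\,{}^{g}(h^{-1})=hgh^{-1}g^{-1}=[h,g],
\]
which lies in $[\Omega H]$. Thus every commutator lies in $[H,\Omega]$, and therefore $H'\leq[H,\Omega]=\Omega H$.

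The main obstacle I expect is the normality step. The derived-subgroup inclusion is a one-line computation, but normality requires finding the right factorization of the conjugated generator into elements of $[\Omega H]$ and their inverses, and checking that factorization carefully.
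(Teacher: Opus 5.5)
Your proof is correct, but the normality step takes a different route from the paper's. The paper uses the hypothesis to show that the conjugate of a generator is again a single generator:
\[
g\,h\,{}^{\sigma}(h^{-1})\,g^{-1}={}^{g}h\cdot{}^{g\sigma g^{-1}}\big(({}^{g}h)^{-1}\big),
\]
that is, it takes $h'={}^{g}h=ghg^{-1}$ and $\tau=g\sigma g^{-1}\in\Omega$. Your abandoned attempt with $\tau=\sigma g^{-1}$ and $gh$ was heading this way; the pairing that works is $g\sigma g^{-1}$ with $ghg^{-1}$. The paper thus obtains the finer fact that the generating set $[\Omega H]$ itself is stable under conjugation by $H$, but only because $H\leq\Omega$ acts on itself by conjugation.

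Your factorization is $g\,c(h)\,g^{-1}=c(gh)\,c(g)^{-1}$ with $c(x)=x\,{}^{\sigma}(x^{-1})$, which is the standard commutator identity $c(gh)=g\,c(h)\,g^{-1}\,c(g)$. It needs only that $\Omega$ acts by automorphisms. So it proves $[H,\Omega]=\Omega H$ for every $\Omega$-group, and shows the special hypothesis is needed only for $H'\leq[H,\Omega]$. This also shows that the paper's remark just before the lemma, that $[H,\Omega]$ is in general not normal in $H$, is incorrect. For the derived subgroup the paper only says the inclusion is clear; your computation $h\,{}^{g}(h^{-1})=hgh^{-1}g^{-1}$ is exactly the check needed.

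Two cosmetic points. Delete the dead-end computation with $\tau:=\sigma g^{-1}$. Replace ``$[\Omega H]$ is closed under conjugation'' by ``conjugates of elements of $[\Omega H]$ lie in $[H,\Omega]$''. That is what you actually prove, and it is all that normality requires.
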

\begin{proof}
The first inclusion $H'\leq [H,\Omega]$ is clear. Let $g\in H, h\in H, \sigma\in \Omega.$ An element of the form $g\cdot (h\cdot ^{\sigma}(h^{-1}))\cdot g^{-1}$ in the generating set of $\Omega H$ can be written
$$g\cdot (h\cdot ^{\sigma}(h^{-1}))\cdot g^{-1}={}^gh\cdot^g(^{\sigma}(h^{-1}))={}^gh\cdot ^{g\sigma}(h^{-1})={}^gh\cdot ^{g\sigma g^{-1}}(^gh^{-1}),$$
hence  $g \cdot (h\cdot ^{\sigma}(h^{-1}))\cdot g^{-1} \in [H,\Omega].$
\end{proof}
The following definition is inspired by the Commutator index property $C(p)$ \cite[Deefinition 1.1]{FJL}.
\begin{definition} \label{def41}
Let $G$ be a finite $\Gamma$-group such that $p$ divides $|G|.$ We say that the element $x\in G$ is $C_{\Gamma,G}(p)$-\textit{good} if:  $p$ divides the index $|C_{G}(x):\Gamma_xC_G(x)|,$ the derived subgroup $(C_G(x))'$ is a subgroup of $\Gamma_xC_G(x)$ and, there is $\gamma_x$ a system of representatives of left cosets of $C_G(x)$ in $G$ satisfying (\ref{eq33}), such that $\gamma_x$ is $\Gamma_x$-stable.
\end{definition}

\begin{example}\label{ex1}\quad

\begin{itemize}

\item[1)] Let $G$ be a finite group such that  $p$ divides $|G/G'|$ and assume that $\Gamma=G$ is acting by conjugation on $G$. Using Lemma \ref{lem41}, for any  $x\in G,$ we have that $\Gamma_x=C_G(x)$ and   $$\Gamma_xC_G(x)=[C_G(x),C_G(x)]=(C_G(x))'.$$ In this case any $x\in Z(G)$ is a $C_{\Gamma,G}(p)$-good element.

\item[2)] Let $(\Gamma,\cdot)=(\mathrm{GL}_2(\mathbb{Z}_2),\cdot),$  $(G,+)=(\mathbb{Z}_2^2,+)$ and we assume $p=2.$ We consider that $\Gamma$ is acting on $G$ by $^Av=A\cdot v,$ for every $A\in \mathrm{GL}_2(\mathbb{Z}_2)$ and $v\in G.$

We choose $x=\begin{pmatrix}1 \\ 0\end{pmatrix}.$ Then
\begin{align*}
\Gamma_x=\left\{\begin{pmatrix}a_1 & a_2\\a_3 & a_4\end{pmatrix}\in \mathrm{GL}_2(\mathbb{Z}_2)\mid \begin{pmatrix}a_1 & a_2\\a_3 & a_4\end{pmatrix}\cdot\begin{pmatrix}1 \\ 0\end{pmatrix}=\begin{pmatrix}1 \\ 0\end{pmatrix}\right\}=\left\{\begin{pmatrix}1 & 0\\0 & 1\end{pmatrix},\begin{pmatrix} 1& 1\\0 & 1\end{pmatrix}\right\}.
\end{align*}

Since $G$ is abelian $C_G(x)=G$ and $\Gamma_x G=[G,\Gamma_x].$   We obtain

\begin{align*}
[G,\Gamma_x]
&=\langle v+A\cdot(-v)\mid v\in \mathbb{Z}_2^2,A\in \Gamma_x\rangle \\
&=\left\langle \begin{pmatrix}0 \\ 0\end{pmatrix},\begin{pmatrix}x_1 \\ x_2\end{pmatrix}+\begin{pmatrix}x_1+x_2 \\ x_2\end{pmatrix}\mid x_1,x_2\in \mathbb{Z}_2\right\rangle \\
&=\left\{\begin{pmatrix}0 \\ 0\end{pmatrix},\begin{pmatrix}1 \\ 0\end{pmatrix}\right\}.
\end{align*}

Therefore  $|G:\Gamma_x G|=2$ and, moreover $$ G'=\{1\} \leq \Gamma_x G.$$
Thus $\begin{pmatrix}1 \\ 0\end{pmatrix}$ is $C_{\Gamma,G}(2)$-good, since $\gamma_x=\{1\}.$

\end{itemize}
\end{example}
\begin{remark} It is clear that Definition \ref{def41} contains a lot of conditions to be verified, but we see in Example \ref{ex1} that it is not that difficult to obtain examples  for which Definition \ref{def41} applies. This remark is also a good opportunity to launch the following question: can someone find a classification of all finite $\Gamma$-groups $G$ with $p$ dividing the order of $G,$ which contains at least one $C_{\Gamma,G}(p)$-good element? If $\Gamma$ acts trivially on $G,$ there are many examples of finite groups $G$ with $C_{\Gamma,G}(p)$-good element, but in this case $\HH^1_{\Gamma}(kG)$ is $\HH^1(kG).$
\end{remark}
\begin{proof}\textbf{(of Theorem \ref{thm11}.)}
 By our hypothesis, it follows that there is $x\in G$ such that $p$ divides $|C_G(x):\Gamma_xC_G(x)|$ and
$$(C_G(x))'\leq \Gamma_xC_G(x)\unlhd C_G(x).$$
Since $C_G(x)/\Gamma_xC_G(x)$ is an abelian group, with order divisible by $p,$ it follows that there is a subgroup $\mathbf{A}/\Gamma_xC_G(x)$ in
$C_G(x)/\Gamma_xC_G(x),$ of order $p;$ hence there is $\mathbf{A}$ a normal subgroup of $C_G(x)$ of index $p,$ such that $\Gamma_xC_G(x)\leq \mathbf{A}.$
Equivalently, we have that $\Hrm_{\Gamma_x}^1(C_G(x),\mathbb{F}_p)\neq 0.$
Explicitly, there is a surjective  group homomorphism $f:(C_G(x),\cdot)\rightarrow (\mathbb{F}_p,+),$ with $f(g_0)\neq 0,$ for some $g_0\in C_G(x)\setminus \{1\},$ such that
$\Gamma_xC_G(x)\leq \mathrm{Ker}(f).$ Let $a\in k,$ $a\neq 0.$ We define $$f_a: (C_G(x),\cdot)\rightarrow (k,+),\quad f_a(g)=f(g)\cdot a,$$ for any $g\in G.$ Since $f_a(g_0)\neq 0,$ clearly $f_a\neq 0,$ and $\mathrm{Ker}(f)\geq \Gamma_xC_G(x).$
This means that $\mathrm{H}_{\Gamma_x}^1(C_G(x),k)\neq 0.$
Finally, using Proposition \ref{prop33} we apply the injective $k$-linear map $\nu_{\Gamma_x,G}^1$  to obtain $\HH_{\Gamma_x}^1(kG)\neq 0.$
\end{proof}

\section{\texorpdfstring{The $\Gamma$-equivariant Hochschild cohomology using a $\Gamma$-cochain complex and proof of Theorem \ref{thm12}}{The Gamma-equivariant Hochschild cohomology using a Gamma-cochain complex and proof of Theorem 1.2}} \label{sec6}

Let $A$ be a $\Gamma$-algebra and $L^*$ be a cochain complex of left $A$-modules.

\begin{definition}\label{defn61}\quad
\begin{itemize}
\item[(i)] We say that $\Gamma$ is acting on $$L^*:
\xymatrix{
& \ldots \ar[r]
& L^{n-1} \ar[r]^{\delta^{n-1}}
& L^n \ar[r]^{\delta^{n}}
& L^{n+1} \ar[r]
& \ldots
}$$
if:
\begin{itemize}
\item $\Gamma$ acts on each $L^n$, $n\in \Z,$ such that $L^n$ is becoming a $\Gamma$-equivariant $A$-module;
\item $\delta^n((L^{n})^{\Gamma})\subseteq (L^{{n+1}})^{\Gamma},$ for every $n\in \Z;$ in particular, this condition is satisfied if $\delta^*$ is compatible with the action of $\Gamma.$

\end{itemize}

\item[(ii)] We assume that $\Gamma$ is acting on $L^*$ and let $n\in \Z.$ The $\Gamma$-\textit{equivariant cohomology groups} of $L^*,$  denoted by $\Hrm_{\Gamma}^n(L^*),$
are defined as the cohomology groups of the next cochain subcomplex:
$$L_{\Gamma}^*: \xymatrix{
& \ldots \ar[r]
& (L^{n-1})^{\Gamma} \ar[r]^{\delta_0^{n-1}}
& (L^n)^{\Gamma} \ar[r]^{\delta_0^{n}}
& (L^{n+1}))^{\Gamma} \ar[r]
& \ldots
}$$
where $\delta_0^n$ is the restriction of $\delta^n$ to $(L^n)^{\Gamma}.$
\end{itemize}
\end{definition}

Let $M$ be a $\Gamma$-equivariant $(A-A)$-bimodule. In Subsection \ref{subsec22} we revisited the  Hochschild cochain complex $C^*(A,M).$
\begin{definition}\label{defn62}
Let $\Gamma$ be a finite group acting on $C^*(A,M).$ Then $\Hrm_{\Gamma}^*(C^*(A,M))$ is called the $\Gamma$-equivariant Hochschild cohomology of the $k$-algebra $A,$ with coefficients in the $\Gamma$-equivariant $(A-A)$-bimodule $M.$
\end{definition}

We will work in this paper in the case when the action of $\Gamma$ on $C^*(A,M)$ is induced by the actions of $\Gamma$ on $A,$ respectively on $M.$
That is, for $n\in \Z,$ $n>0$ we have
     $$\Gamma\times C^n(A,M)\rightarrow C^n(A,M)$$
     $$(\sigma,f)\mapsto {}^{\sigma}f;\quad {}^{\sigma}f: A^{\otimes n}\rightarrow M$$
     \begin{align*}
     {}^{\sigma}f(a_1\otimes\dots\otimes a_n)
     &= {}^{\sigma} f({}^{\sigma^{-1}}a_1\otimes\dots\otimes {}^{\sigma^{-1}}a_n),
     \end{align*}
     for any $a_1,\dots,a_n\in A, \sigma\in \Gamma.$
In this case we recover the $n$-th $\Gamma$-equivariant Hochschild cohomology of $A$
with coefficients in $M,$ as defined in Subsection \ref{subsec22},
$$
\HH_{\Gamma}^n(A,M)=\Hrm_{\Gamma}^n(C^*(A,M)).
$$

\begin{remark}
When $G$ is a finite group on which $\Gamma$ is acting, the Bar $kG$-resolution of $k,$ denoted by $\mathrm{Bar}_*(k),$ is a free $kG$-resolution of $k$ on which $\Gamma$ is acting by
  $$^{\sigma}(g[g_1,\dots,g_n])=^{\sigma}g[^{\sigma}g_1,\dots,^{\sigma}g_n],\mbox{ }n\geq 1,$$
  $$g,g_1,\dots,g_n\in G.$$ We assume that $\Gamma$ is acting trivially on $k$ and for any $n\geq 1$ we have that
    $$\mathrm{Hom}_{k(G\rtimes \Gamma)}(\mathrm{Bar}_n(k),k)\cong(C^n(G,k))^{\Gamma},$$
 see Subsection \ref{subsec21}. Thus $\Hrm_{\Gamma}^n(G,k)\cong\Hrm_{\Gamma}^n(C^*(G,k)).$ This remark is  inspired by Innassaridze,  which in  \cite[Section 5]{In}, studies the $\Gamma$-equivariant homology.
\end{remark}
The category of $\Gamma$-equivariant $(A-A)$-bimodules  is  denoted $(A,\Gamma)-\Bim$ with
$$
\Ob\big((A,\Gamma)-\Bim\big)
= \{\, X \mid X \text{ is a } \Gamma-\text{equivariant } (A-A)-\text{bimodule} \,\},
$$

$$
\Hom_{(A,\Gamma)-\Bim}(X,Y)
= \{\, f \in \Hom_{A^e}(X,Y) \mid
f({}^{\sigma}x) = {}^{\sigma}f(x),\ \forall \sigma \in \Gamma,\ \forall x \in X \,\}.
$$
This category is introduced in \citep{KoPi}, where it is studied in the context of oriented $\Gamma$-algebras. For any $H$-algebra $B$ (where $H$ is a finite group), the skew group algebra $B\star H$ is  $B\otimes kH$ as vector space, with the multiplication given by
\begin{equation}\label{eq61} (b_1\star g_1)(b_2 \star g_2)=b_1 ({ }^{g_1}b_2)\star g_1g_2
\end{equation}
 for any  $b_1,b_2\in B, g_1,g_2\in H.$
\begin{lemma}\label{lem71}
With the above notations, let $M \in \Ob((A,\Gamma)-\Bim).$
\begin{enumerate}
\item[(i)] There is an isomorphism of $\Gamma$-algebras $(A, \Gamma)^e \cong A^e \star\Gamma;$
\item[(ii)] The following isomorphisms of categories hold:
$$
(A,\Gamma)-\Bim \cong (A, \Gamma)^e-\Mod \cong A^e \star \Gamma-\Mod;
$$
\item[(iii)] There is an isomorphism of cochain complexes:
$$
\Hom_{(A,\Gamma)^e}(\mathrm{Bar}_*(A), M) \cong (C^*(A, M))_{\Gamma},$$
where $\Barc_*(A) \rightarrow A$ is the Hochschild Bar resolution.
\end{enumerate}
\end{lemma}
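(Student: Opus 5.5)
For (i), I will write down the explicit map
$$\Phi:(A,\Gamma)^e=A\otimes k\Gamma\otimes A\to A^e\star\Gamma,\quad \Phi(a\otimes\sigma\otimes b)=(a\otimes b)\star\sigma .$$
It is clearly a $k$-linear bijection, since both sides are $A\otimes A\otimes k\Gamma$ as vector spaces. Multiplicativity is a direct check using (\ref{eq61}) and the product of $A^e$:
$$\bigl((a\otimes b)\star\sigma\bigr)\bigl((c\otimes d)\star\tau\bigr)=(a\otimes b)({}^{\sigma}c\otimes{}^{\sigma}d)\star\sigma\tau=(a\,{}^{\sigma}c\otimes {}^{\sigma}d\,b)\star\sigma\tau .$$
This agrees with $\Phi$ applied to the product in $(A,\Gamma)^e$. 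Units correspond, $1\otimes 1\otimes 1\mapsto (1\otimes1)\star 1$. For $\Gamma$-compatibility I let $\Gamma$ act on both sides by ${}^{\rho}(a\otimes\sigma\otimes b)={}^{\rho}a\otimes\rho\sigma\rho^{-1}\otimes{}^{\rho}b$, and analogously on the skew group algebra (conjugation on the group part). $\Phi$ obviously intertwines these actions.

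For (ii), by (i) it suffices to prove the first isomorphism, and to identify $A^e\star\Gamma$-modules with $A^e$-modules carrying a compatible $\Gamma$-action. Given $X\in\Ob((A,\Gamma)-\Bim)$, I define
$$(a\otimes\sigma\otimes b)\cdot x=a\,({}^{\sigma}x)\,b .$$
Associativity of this action amounts exactly to the equivariance rules ${}^{\sigma}(cxd)={}^{\sigma}c\,{}^{\sigma}x\,{}^{\sigma}d$ and ${}^{\sigma}({}^{\tau}x)={}^{\sigma\tau}x$. I will check it once on generators:
$$(a\otimes\sigma\otimes b)\bigl((c\otimes\tau\otimes d)x\bigr)=a\,{}^{\sigma}c\,{}^{\sigma\tau}x\,{}^{\sigma}d\,b .$$
Conversely, a module over $(A,\Gamma)^e$ restricts along $A^e\hookrightarrow(A,\Gamma)^e$ and $k\Gamma\hookrightarrow(A,\Gamma)^e$. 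The relations $(1\otimes\sigma\otimes1)(c\otimes1\otimes d)=({}^{\sigma}c\otimes\sigma\otimes{}^{\sigma}d)$ give back equivariance. Morphisms on both sides are the $A^e$-linear maps commuting with $\Gamma$, so the functors are mutually inverse and act as the identity on morphisms.

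For (iii), I view $\Barc_n(A)=A\otimes A^{\otimes n}\otimes A$ as a $\Gamma$-equivariant bimodule via the diagonal action on all tensor factors. The bar differential is $\Gamma$-equivariant because $\Gamma$ acts by algebra automorphisms, so $\Barc_*(A)$ is a complex of $(A,\Gamma)^e$-modules. The standard isomorphism
$$\Hom_{A^e}(\Barc_n(A),M)\cong C^n(A,M),\quad F\mapsto f,\ f(a_1\otimes\cdots\otimes a_n)=F(1\otimes a_1\otimes\cdots\otimes a_n\otimes1),$$
is compatible with the differentials. By (ii), $(A,\Gamma)^e$-linear maps are exactly the $A^e$-linear maps with $F({}^{\sigma}y)={}^{\sigma}F(y)$. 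Since ${}^{\sigma}(1\otimes a_1\otimes\cdots\otimes a_n\otimes 1)=1\otimes{}^{\sigma}a_1\otimes\cdots\otimes{}^{\sigma}a_n\otimes1$ and these elements generate $\Barc_n(A)$ as a bimodule, $F$ is $\Gamma$-linear iff $f({}^{\sigma}a_1\otimes\cdots\otimes{}^{\sigma}a_n)={}^{\sigma}f(a_1\otimes\cdots\otimes a_n)$. That is, iff $f\in C^n_{\Gamma}(A,M)=(C^n(A,M))^{\Gamma}$. The complex $(C^*(A,M))_{\Gamma}$ in the statement must therefore be read as this subcomplex of invariants, as in Definition \ref{defn61}. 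The cases $n=0$, where both sides equal $M^{\Gamma}$, and the compatibility with differentials are then immediate.

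The main obstacle is bookkeeping rather than concept. I must fix the $\Gamma$-action on $(A,\Gamma)^e$ (conjugation on the $k\Gamma$ factor) so that (i) is genuinely an isomorphism of $\Gamma$-algebras. I must also verify carefully that the converse construction in (ii) recovers equivariance, with the order of $b$ and ${}^{\sigma}d$ reversed because of $A^{op}$.
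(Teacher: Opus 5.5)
Your proposal is correct and follows the paper's proof: the same explicit map $a\otimes\sigma\otimes b\mapsto (a\otimes b)\star\sigma$ for (i); the identification of $(A,\Gamma)^e$-modules with $\Gamma$-equivariant bimodules for (ii), which the paper cites from \cite[Lemma 6.1]{KoPi} and you verify directly; and, for (iii), the standard isomorphism $\Hom_{A^e}(\Barc_*(A),M)\cong C^*(A,M)$ restricted to $\Gamma$-linear maps. You are more explicit than the paper in three places: you fix the $\Gamma$-action on $(A,\Gamma)^e$ as conjugation on the $k\Gamma$ factor, you check $\Gamma$-compatibility on the bimodule generators $1\otimes a_1\otimes\cdots\otimes a_n\otimes 1$, and you correctly read $(C^*(A,M))_{\Gamma}$ as the subcomplex of $\Gamma$-invariants from Definition \ref{defn61}.
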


\begin{proof}\quad
\begin{itemize}
\item[(i)] The isomorphism is given by $$f:(A,\Gamma)^e\rightarrow A^e\star\Gamma,\quad f(a\otimes \sigma\otimes b)=(a\otimes b)\star\sigma,$$
with the inverse $$f^{-1}: A^e\star\Gamma\rightarrow (A,\Gamma)^e, \quad f^{-1}((a\otimes b)\star\sigma)=a\otimes \sigma\otimes b,$$
for any $a,b\in A, \sigma\in \Gamma.$
Both $f$ and $f^{-1}$ are $\Gamma$-homomorphisms.
\item[(ii)] The first isomorphism of categories is given in \cite[Lemma 6.1]{KoPi} and the second can be proved similarly.
\item [(iii)] Explicitly, the Hochschild Bar resolution is:
$$\Barc_*(A) \rightarrow A, \quad \Barc_n(A) = A^{\otimes(n+2)} \cong A^e \otimes A^{\otimes n},$$
\begin{gather*}
\dots\to A^{\otimes (n+3)} \xrightarrow{d_{n+1}} A^{\otimes (n+2)} \xrightarrow{d_{n}} A^{\otimes (n+1)} \to \dots \\
\dots \to A^{\otimes 3} \xrightarrow{d_{1}} A\otimes A \xrightarrow{\mu} A \to 0
\end{gather*}

$$d_n(a_0 \otimes \dots \otimes a_{n+1}) = \sum_{i=0}^n (-1)^i a_0 \otimes a_1 \otimes \dots \otimes a_i a_{i+1} \otimes \dots \otimes a_{n+1}.$$
$\Barc_*(A)$ is a projective resolution of $A$ as $(A-A)$-bimodule, and in fact each $\Barc_n(A)$, $n\geq 0,$ is a relative  $k\Gamma$-projective (free) left $A^e$-module, see the proof of \cite[Theorem 5.6 (3)]{In} and Subsection \ref{subsec23}. It follows that
$$\Hom_{(A,\Gamma)^e}(\Barc_*(A),M)\cong \Hom_{(A,\Gamma)-\Bim}(\Barc_*(A),M)=$$ $$\Hom_{A^e}(\Barc_*(A),M)\cap\Hom_{k\Gamma}(\Barc_*(A),M)\cong
(C^*(A,M))_{\Gamma}, $$
where the first isomorphism is true by Lemma \ref{lem71} (ii), and the second equality is exactly the definition of $\Hom$ in $(A,\Gamma)-\Bim.$  The last isomorphism is obtained using the isomorphism \cite[(1.1.11)]{Wi}, which  is clearly compatible with action of $\Gamma$ and the differentials.
\end{itemize}
\end{proof}

\begin{proof}\textbf{(of Theorem \ref{thm12}.)}
\begin{itemize}
\item[i)] The Hochschild cochain complex $C^*(A,M)$ is $\Gamma$-acted (since $M$ is a $\Gamma$-equivariant $(A-A)$-bimodule) and, by Definition \ref{defn62} we know that
\begin{equation} \label{eq71}
\HH_{\Gamma}^n(A,M)=\Hrm^n((C^*(A,M))_{\Gamma}).
\end{equation}
Again, by the proof of \cite[Theorem 7.6 (3)]{In} we know that $\Barc_*(A)\rightarrow A$ is a relatively $k\Gamma$-projective resolution of $A$ as a left $A^e$-module, hence using Subsection \ref{subsec23} we obtain
\begin{equation}\label{eq72}
\Ext_{(A,\Gamma)^e,k\Gamma}^n (A,M)=\Hrm^n(\Hom_{(A,\Gamma)^e}(\Barc_*(A),M))
\end{equation}
Finally, using Lemma \ref{lem71} (iii) and the relations (\ref{eq71}), (\ref{eq72}) we obtain the desired conclusion.
\item[ii)] By \cite[Theorem 6.1]{KoPi} there is a natural $k$-linear isomorphism $\Hrm_{\Gamma}^n(A,M)\cong \Ext_{(A,\Gamma)^e}^n (A,M).$ Now the proof is an easy consequence of statement i), (\ref{eqSha1}) and (\ref{eqSha2}).
\end{itemize}
\end{proof}

\backmatter
\section*{Declarations}
This work was supported by a grant of the Ministry of Research, Innovation and Digitization, CNCS -UEFISCDI, project number PN-IV-P1-PCE-2023-0060, within PNCDI IV.\\

\section*{Acknowledgements}
We thank Matthew Antrobus  for pointing us an error in the proof of Proposition \ref{prop32}, of a previous version of this paper.

\bibliography{sn-bibliography_pojar_todea}

@article{Ar,
  author  = "Aryutunov, A. A. and Mischenko, A. S.",
  title   = "Smooth version for {J}ohnson's problem on derivations of group algebras",
  journal = "Mat. Sb.",
  volume  = "210 (6)",
  pages   = "3--29",
  year    = "2019",
  doi     = " https://doi.org/10.1070/SM9119"
}

@article{Br,
  author  = "Briggs, B. and {Rubio y Degrassi}, L.",
  title   = "Outer derivations on blocks of group algebras",
  journal = "arXiv:2601.09602 [math.RT]",
  volume  = "",
  pages   = "",
  year    = "2026",
  doi     = "https://doi.org/10.48550/arXiv.2601.09602"
}

@article{Bu,
  author  = "Burghelea, D.",
  title   = "The cyclic homology of the group rings",
  journal = "Commentarii Mathematici Helvetici",
  volume  = "60",
  pages   = "354--365",
  year    = "1985",
  doi     = "https://doi.org/10.1007/BF02567420"
}

@article{Ceg,
  author  = "Cegarra, A. M. and Garc{\'i}a-Calcinez, J. M. and Ortega, J. A.",
  title   = "Cohomology of groups with operators",
  journal = "Homology, Homotopy and Applications",
  volume  = "4",
  number  = "1",
  pages   = "1--23",
  year    = "2002",
  doi     = "https://doi.org/10.4310/HHA.2002.v4.n1.a1"
}

@article{CoTo,
  author  = "Cocone{\c{t}}, T. and Todea, C.-C.",
  title   = "Symmetric {H}ochschild cohomology of twisted group algebras",
  journal = "Homology, Homotopy and Applications",
  volume  = "24",
  number  = "1",
  pages   = "93--115",
  year    = "2022",
  doi     = "https://doi.org/10.4310/HHA.2022.v24.n1.a5"
}

@article{FJL,
  author  = "Fleischmann, P. and Janiszczak, I. and Lempken, W.",
  title   = "Finite groups have local {N}on-{S}chur centralizers",
  journal = "Manuscripta Mathematica",
  volume  = "80",
  pages   = "213--224",
  year    = "1993",
  doi     = "https://doi.org/10.1007/BF03026547"
}

@article{GeSc,
  author  = "Gerstenhaber, M. and Schack, S. D.",
  title   = "Relative {H}ochschild cohomology, rigid algebras and the {B}ockstein",
  journal = "Journal of Pure and Applied Algebra",
  volume  = "43",
  pages   = "53--74",
  year    = "1986",
  doi     = "https://doi.org/10.1016/0022-4049(86)90004-6"
}

@article{Ho,
  author  = "Hochschild, G.",
  title   = "Relative homological algebra",
  journal = "Transactions of the American Mathematical Society",
  volume  = "82",
  number  = "1",
  pages   = "246--269",
  year    = "1956",
  doi     = "https://doi.org/10.1090/S0002-9947-1956-0080654-0"
}

@article{In05,
  author  = "Inassaridze, H.",
  title   = "Equivariant homology and cohomology of groups",
  journal = "Topology and its Applications",
  volume  = "153",
  pages   = "66--89",
  year    = "2005",
  doi     = "https://doi.org/10.1016/j.topol.2004.12.005"
}

@article{In,
  author  = "Inassaridze, H.",
  title   = "({C}o)homology of {$\Gamma$}-groups and {$\Gamma$}-homological algebra",
  journal = "European Journal of Mathematics",
  volume  = "8",
  number  = "Suppl 2",
  pages   = "720--763",
  year    = "2022",
  doi     = "https://doi.org/10.1007/s40879-022-00558-0"
}

@book{Ja,
  author    = "Jacobson, N.",
  title     = "Lie algebras",
  publisher = "Volume No.10 of {I}nterscience {T}racts in {P}ure and {A}pplied {M}athematics. {I}nterscience {P}ublisers",
  address   = "New York-London",
  year      = "1962",
  doi       = ""
}

@article{Je,
  author  = "Jensen, K. K.",
  title   = "Foundations of an {E}quivariant {C}ohomology {T}heory of {B}anach {A}lgebras, {I}",
  journal = "Adv. in Math.",
  volume  = "117",
  pages   = "52--146",
  year    = "1996",
  doi     = "https://doi.org/10.1006/aima.1996.0003"
}

@article{KoPi,
  author  = "Koam, A. N. A. and Pirashvili, T.",
  title   = "Cohomology of oriented algebras",
  journal = "Communications in Algebra",
  volume  = "46",
  number  = "7",
  pages   = "2947--2963",
  year    = "2018",
  doi     = "https://doi.org/10.1080/00927872.2017.1404089"
}

@article{LiZh,
  author  = "Liu, Y. and Zhou, G.",
  title   = "The {B}atalin--{V}ilkovisky structure over the {H}ochschild cohomology ring of a group algebra",
  journal = "Journal of Noncommutative Geometry",
  volume  = "10",
  pages   = "811--858",
  year    = "2016",
  doi     = "https://doi.org/10.4171/JNCG/249"
}

@article{Li22,
  author  = "Linckelmann, M.",
  title   = "Finite dimensional algebras arising as blocks of finite group algebras",
  journal = "Contemporary Mathematics",
  volume  = "705",
  pages   = "",
  year    = "2018",
  doi     = "http://dx.doi.org/10.1090/conm/705/14202"
}

@article{Mu,
  author  = "Mueller, L. and Wike, L.",
  title   = "Equivariant higher {H}ochschild homology and topological field theories",
  journal = "Homology, Homotopy and Applications",
  volume  = "22",
  number  = "1",
  pages   = "27--54",
  year    = "2020",
  doi     = "http://dx.doi.org/10.4310/HHA.2020.v22.n1.a3"
}

@article{Gou,
  author  = "Mukherjee, G. and Yadav, R. B.",
  title   = "Equivariant one-parameter deformations of associative algebras",
  journal = "Journal of Algebra and Its Applications",
  volume  = "19",
  number  = "6",
  pages   = "2050114",
  year    = "2020",
  doi     = "https://doi.org/10.1142/S0219498820501145",
  note    = "18 pages"
}

@book{Wi,
  author    = "Witherspoon, S. J.",
  title     = "Hochschild cohomology for algebras",
  publisher = "American Mathematical Society",
  address   = "Providence, Rhode Island",
  year      = "2019",
  doi       = "https://doi.org/10.1090/gsm/204"
}
\end{document}